\theoremstyle{plain} 
\newtheorem{theorem}{Theorem}[section]
\newtheorem{lemma}[theorem]{Lemma}
\newtheorem{corollary}[theorem]{Corollary}
\newtheorem{proposition}[theorem]{Proposition}
\newtheorem{conj}[theorem]{Conjecture}
\theoremstyle{definition} 
\newtheorem{deff}[theorem]{Definition}
\theoremstyle{remark} 
\newtheorem{remark}[theorem]{Remark}
\newcommand{\bbR}{\mathbb{R}}
\newcommand{\bbE}{\mathbb{E}}
\newcommand{\bfp}{\mathbf{p}}
\newcommand{\bfq}{\mathbf{q}}
\newcommand{\bfr}{\mathbf{r}}
\newcommand{\Vol}{\mathrm{Vol}}
\begin{document}

\title{Kneser-Poulsen conjecture for a small number of intersections}

\author{Igors Gorbovickis}
\address{Department of Mathematics, University of Toronto, Room 6290, 40~St. George Street, Toronto, Ontario, Canada M5S~2E4}
\email{igors.gorbovickis@utoronto.ca}

%

\date{(June 23, 2010), and in revised form (date2).}
\subjclass[2000]{}
\keywords{Kneser-Poulsen conjecture, Volume inequalities}

\thanks{Research supported in part by NSF Grant No. DMS–0209595 (USA)}

\begin{abstract}
The Kneser-Poulsen conjecture says that if a finite collection of balls in the Euclidean space $\bbE^d$ is rearranged so that the distance between each pair of centers does not get smaller, then the volume of the union of these balls also does not get smaller. In this paper we prove that if in the initial configuration the intersection of any two balls has common points with no more than $d+1$ other balls, then the conjecture holds.
\end{abstract}

\maketitle

\section{Notation}
Given a positive integer $d$, by $\bbE^d$ we will denote a $d$-dimensional Euclidean space. 
Let $\bfp= (\bfp_1,\ldots , \bfp_N)$ and $\bfq= (\bfq_1,\ldots , \bfq_N)$ be two configurations of $N$ points, where each $\bfp_i\in\bbE^d$ and each $\bfq_i\in\bbE^d$. Let $|\ldots|$ be the Euclidean norm. If for all $1\le i<j\le N$, $|\bfp_i-\bfp_j|\le |\bfq_i-\bfq_j|$, we say that $\bfq$ is an \textit{expansion} of $\bfp$ and $\bfp$ is a \textit{contraction} of $\bfq$. 
If $\bfp_0\in\bbE^d$, we denote by $B_d(\bfp_0, r)$ the closed $d$-dimensional ball of radius $r$ in $\bbE^d$ about the point $\bfp_0$. We define $B_d(\bfp_0, r)$ to be an empty set, if $r<0$ or if $r$ is not a real number. We also let $\Vol_d$ represent the $d$-dimensional volume.

\section{Introduction}
The following conjecture was independently stated by Kneser~\cite{Kneser} in 1955 and Poulsen~\cite{Poulsen} in 1954 for the case when $r_1=\dots =r_N$:

\begin{conj} \label{KP1}
If $\bfq= (\bfq_1,\ldots , \bfq_N)$ is an expansion of $\bfp= (\bfp_1,\ldots , \bfp_N)$ in $\bbE^d$, then for any vector of radii $\bfr=(r_1,\dots,r_N)$,
\begin{equation}\label{KPineq}
\Vol_d\left[\bigcup_{i=1}^N B_d(\bfp_i, r_i)\right] \le \Vol_d\left[\bigcup_{i=1}^N B_d(\bfq_i, r_i)\right].
\end{equation}
\end{conj}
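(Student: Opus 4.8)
The plan is to combine Csik\'os's Schl\"afli-type differential formula for the volume of a union of balls with the Bezdek--Connelly leapfrog lemma, and then to exploit the hypothesis on intersections to reduce the problem to a bounded number of balls. First, a standard approximation argument reduces the conjecture, under our hypothesis, to configurations in generic position. For such a configuration, if the centers move smoothly, $\bfp_i=\bfp_i(t)$, with the radii held fixed, Csik\'os's formula reads
\[
\frac{d}{dt}\,\Vol_d\!\left[\bigcup_{i=1}^N B_d\big(\bfp_i(t),r_i\big)\right]
=\sum_{1\le i<j\le N}\frac{\Vol_{d-1}\!\big(W_{ij}(t)\big)}{\,|\bfp_i(t)-\bfp_j(t)|\,}\;\frac{d}{dt}\,|\bfp_i(t)-\bfp_j(t)|,
\]
where $W_{ij}(t)$ is the facet common to the cells of $B_d(\bfp_i,r_i)$ and $B_d(\bfp_j,r_j)$ in the truncated power diagram of the union. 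This facet lies on the radical hyperplane of the two balls and inside $B_d(\bfp_i,r_i)\cap B_d(\bfp_j,r_j)$, so it is empty unless the two balls overlap, and always $\Vol_{d-1}(W_{ij})\ge0$; hence along any monotone expansion the volume of the union does not decrease. The leapfrog lemma then provides, for any expansion $\bfq$ of $\bfp$, an analytic monotone expansion $t\mapsto(\cos t\cdot\bfp_i,\ \sin t\cdot\bfq_i)$ in $\bbE^{2d}$ joining $(\bfp_i,\mathbf 0)$ to $(\mathbf 0,\bfq_i)$; applying the formula there and integrating out the extra $d$ coordinates yields, for every vector of radii, the averaged inequality
\[
\int_{\bbE^d}\Vol_d\!\Big[\bigcup_{i}B_d\big(\bfp_i,\rho_i(y)\big)\Big]dy\;\le\;\int_{\bbE^d}\Vol_d\!\Big[\bigcup_{i}B_d\big(\bfq_i,\rho_i(y)\big)\Big]dy,\qquad \rho_i(y)=\sqrt{\big(r_i^2-|y|^2\big)_+},
\]
which is weaker than \eqref{KPineq}.

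The role of the hypothesis is to localize the problem so that this weak form can be upgraded. Note first that the hypothesis is inherited by every configuration obtained from $\bfp$ by shrinking radii, since that only shrinks each lens $B_d(\bfp_i,r_i)\cap B_d(\bfp_j,r_j)$. Next, the facet $W_{ij}$ is, explicitly, the $(d-1)$-dimensional disk cut on the radical hyperplane of $B_d(\bfp_i,r_i)$ and $B_d(\bfp_j,r_j)$ by the constraint $|x-\bfp_i|\le r_i$ together with one halfspace for each ball $B_k$ that meets the lens $B_d(\bfp_i,r_i)\cap B_d(\bfp_j,r_j)$ --- by hypothesis at most $d+1$ of them. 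Thus $W_{ij}$, and with it the corresponding piece of $\Bdy\big(\bigcup_i B_d(\bfp_i,r_i)\big)$, depends only on a cluster of at most $d+3$ of the balls. The plan is to use this to reduce \eqref{KPineq} to the case of a small number of balls: partition the boundary of the union consistently among such clusters, prove the desired monotonicity cluster by cluster --- where the metric of the at most $d+3$ relevant centers is rigid and can be handled directly, together with the known instances of the conjecture for few balls --- and then reassemble the contributions.

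The step I expect to be the main obstacle is exactly this localization and reassembly. Csik\'os's formula presents $\tfrac{d}{dt}\Vol_d$ as a sum over pairs, but the summand for $(i,j)$ is not itself the first variation of the volume of a fixed region, and along a path that is not monotone in $|\bfp_i-\bfp_j|$ it has indeterminate sign; the hypothesis must be made to force the negative contributions to cancel. Making this precise requires choosing the comparison path so that the incidence structure of the clusters varies controllably off a lower-dimensional bad set, attributing each piece of $\Bdy\big(\bigcup_i B_d(\bfp_i,r_i)\big)$ unambiguously to a cluster of size $\le d+3$ so that the pieces add up to the whole, and finally establishing a cluster-wise inequality in which the at most $d+1$ truncating balls play the role of ``the rest of the arrangement'' --- most plausibly via a Cayley--Menger / infinitesimal-rigidity analysis of the small metric combined with Csik\'os's formula applied inside the cluster. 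This last point is the technical heart of the argument.
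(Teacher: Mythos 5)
There is a genuine gap: your plan stalls exactly at the step you flag as ``the main obstacle,'' and the cluster decomposition you propose there is not how the hypothesis actually gets used. The difficulty is real --- attributing pieces of $\Bdy\left[\bigcup_i B_d(\bfp_i,r_i)\right]$ to overlapping clusters of $\le d+3$ balls so that the contributions both partition the boundary and individually satisfy a monotonicity inequality is not something the hypothesis obviously provides, and you give no mechanism for upgrading the averaged (higher-dimensional) inequality you correctly identify as too weak. The paper closes this gap by an entirely different device: it introduces the uniform shift $r_i(s)=\sqrt{r_i^2+s}$ of all squared radii and uses the Archimedes-type lemma of Bezdek--Connelly (their Corollary~6), which says that for a ``truncated polytope'' $P(s)=B_{n+2k}(\bfp_0,\sqrt{r_0^2+s})\cap\bigcap H_i$ whose boundary hyperplanes are all orthogonal to an $n$-dimensional subspace $S\ni\bfp_0$, one has $\frac{d^k}{ds^k}\Vol_{n+2k}[P(s)]=\pi^k\Vol_n[S\cap P(s)]$. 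Applying this to the whole union at $t=0$ and $t=1$ converts the $(d+2k)$-dimensional volume difference back into exactly $\pi^k$ times the $d$-dimensional one --- no averaging loss.

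The hypothesis then enters not to localize the configuration but to control the walls $W_{d+2k-1,ij}(\bfp(t),s)$ along the leapfrog path: since at most $d+1$ other balls meet each lens at $t=0$, and (as is easily checked) no new triple intersections appear under expansion, each wall is a ball in the $(d+2k-1)$-dimensional radical hyperplane cut by at most $d+1$ halfspaces. The normals of these halfspaces span at most $d+1$ dimensions, so for each $l\le k$ there is a $(d+2k-2l+1)$-dimensional subspace orthogonal to all of them, and the Archimedes lemma applies to the wall itself: $\frac{\partial^k}{\partial s^k}\Vol_{d+2k-1}[W_{ij}]$ equals $\pi^{k-1}$ times an $s$-derivative of a lower-dimensional truncated-polytope volume, which is $\frac{1}{2r_{ij}}$ times a spherical perimeter, hence nonnegative. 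Plugging this into Csik\'os's formula, interchanging $\partial^k/\partial s^k$ with $\int_0^1 dt$ (justified by continuity), and using $d_{ij}'\ge 0$ finishes the proof. Your instinct that the sign-indeterminacy of the pairwise summands is the crux is right, but the resolution is this dimension-shifting differentiation in $s$, not a combinatorial partition into small clusters; as written, your argument does not yield the inequality.
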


For $d=1$, Conjecture~\ref{KP1} is obvious. In the case when $d=2$, it was proved by K.~Bezdek and R.~Connelly in~\cite{BezdekConnelly}, and for $d\ge 3$ the conjecture currently remains open.
References to related results as well as the history of this conjecture can be found in the same paper~\cite{BezdekConnelly}.
In the current paper we prove the following theorem which confirms Conjecture~\ref{KP1} for $d\ge 3$ under some additional assumptions:

\begin{theorem}\label{MainTh}
Consider a configuration of $N$ closed $d$-dimensional balls defined by their centers $\bfp= (\bfp_1,\ldots , \bfp_N)$ in $\bbE^d$ and corresponding radii $\bfr=(r_1,\dots,r_N)$. Assume that the intersection of every pair of these balls has common points with no more than $d+1$ other balls from the considered configuration. Then for any expansion $\bfq= (\bfq_1,\ldots , \bfq_N)\subset\bbE^d$ of the centers $\bfp$, inequality~(\ref{KPineq}) holds.
\end{theorem}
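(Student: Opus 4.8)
The plan is to run the variational (differential) method of Csik\'os, using the hypothesis to reduce the whole computation to subconfigurations of at most $d+3$ balls. Recall Csik\'os's formula: for a piecewise real-analytic motion $\bfp(t)$, $t\in[0,1]$, of the centers in $\bbE^d$,
\[
\frac{d}{dt}\,\Vol_d\!\left[\bigcup_{i=1}^N B_d(\bfp_i(t),r_i)\right]=\sum_{1\le i<j\le N}w_{ij}(t)\,\frac{d}{dt}\,|\bfp_i(t)-\bfp_j(t)|,
\]
where $w_{ij}(t)\ge 0$ is a $(d-1)$-dimensional measure --- a ``wall'' on the radical hyperplane of $B_d(\bfp_i(t),r_i)$ and $B_d(\bfp_j(t),r_j)$, inside their common part --- that vanishes unless these two balls meet; moreover $w_{ij}$ is determined by $B_d(\bfp_i,r_i)$, $B_d(\bfp_j,r_j)$ and the balls $B_d(\bfp_k,r_k)$ with $B_d(\bfp_i,r_i)\cap B_d(\bfp_j,r_j)\cap B_d(\bfp_k,r_k)\neq\varnothing$, hence by hypothesis by at most $d+3$ balls. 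If $\bfp$ and its expansion $\bfq$ could be joined by an analytic motion inside $\bbE^d$ along which every $|\bfp_i(t)-\bfp_j(t)|$ is non-decreasing, the formula would give (\ref{KPineq}) immediately; such an expansive motion need not exist in $\bbE^d$, but one always exists in $\bbE^{2d}$ --- the linear interpolation $\bfp(t)$ of the squared-distance matrices of $\bfp$ and $\bfq$ is non-decreasing in every entry, and its Gram matrix stays positive semidefinite of rank at most $2d$. The task is therefore to transport the conclusion from $\bbE^{2d}$ down to $\bbE^d$.

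The bridge is inclusion--exclusion together with the hypothesis. An easy observation shows that any index set $S$ with $\bigcap_{i\in S}B_d(\bfp_i,r_i)\neq\varnothing$ has $|S|\le d+3$: for $i,j\in S$ every ball indexed by $S\setminus\{i,j\}$ meets $B_d(\bfp_i,r_i)\cap B_d(\bfp_j,r_j)$, and there are at most $d+1$ of those. By Kirszbraun's theorem the expanded configuration $\bfq$ has such a common point only for index sets that already had one for $\bfp$, so the same bound holds for $\bfq$. Hence, along the motion $\bfp(t)$ above, one can form
\[
\Phi(t)\;=\;\sum_{S\,:\,\bigcap_{i\in S}B_d(\bfp_i,r_i)\neq\varnothing}(-1)^{|S|+1}\,\Vol_d\!\left[\bigcap_{i\in S}B_d\big(\sigma_S(t),r_i\big)\right],
\]
where $\sigma_S(t)$ is a realization in $\bbE^d$ of the subconfiguration $\{\bfp_i(t)\}_{i\in S}$: all such subconfigurations have at most $d+3$ points, and those with $|S|\le d+1$ are automatically realizable in $\bbE^d$ because the relevant Gram matrix is then $(|S|-1)\times(|S|-1)$ and positive semidefinite (the sizes $d+2$ and $d+3$ need an extra argument, discussed below). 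By the full inclusion--exclusion identity, which is valid at $t=0$ and $t=1$ where the whole configuration lies in $\bbE^d$, we get $\Phi(0)=\Vol_d[\bigcup_i B_d(\bfp_i,r_i)]$ and $\Phi(1)=\Vol_d[\bigcup_i B_d(\bfq_i,r_i)]$.

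The key point is that $\Phi'(t)$ is sign-definite although the terms of $\Phi$ alternate in sign. When the whole configuration lies in $\bbE^d$, the sum defining $\Phi$ equals $\Vol_d[\bigcup_i B_d(\cdot,r_i)]$, so differentiating term by term (using the ``intersection'' version of Csik\'os's formula) and collecting the contribution of each pair $\{i,j\}$ must reproduce Csik\'os's union formula; the alternating signs cancel via the elementary identity $\sum_{T\subseteq A}(-1)^{|T|}=0$ for $A\neq\varnothing$. Since every quantity involved depends only on the subconfigurations on at most $d+3$ balls, this identity persists along $\Phi$, and
\[
\Phi'(t)\;=\;\sum_{1\le i<j\le N}w_{ij}(t)\,\frac{d}{dt}\,|\bfp_i(t)-\bfp_j(t)|\;\ge\;0,
\]
the last inequality because the motion is expansive; integrating from $0$ to $1$ gives (\ref{KPineq}). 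The point that still requires work --- and where I expect the main difficulty to be --- is the realizability of the subconfigurations of sizes $d+2$ and $d+3$ in $\bbE^d$ along the motion: for these the linear interpolation of distance matrices may temporarily leave $\bbE^d$, so one must either replace the straight-line motion by a more careful expansive motion along which each of the finitely many relevant small subconfigurations stays in $\bbE^d$, or invoke the Kneser--Poulsen inequality for at most $d+3$ balls as a separate input. This is exactly the step at which the bound ``$d+1$ other balls'' is essential.
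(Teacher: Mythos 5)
Your two structural observations are sound --- the hypothesis forces every index set with a common point to have size at most $d+3$, and an expansive motion joining $\bfp$ to $\bfq$ exists once you allow enough extra dimensions --- but the two steps you flag as ``needing work'' are exactly where the proof has to live, and neither closes along the lines you suggest. First, realizability: for index sets of size $d+2$ or $d+3$ the interpolated distance matrices generally leave the cone of $\bbE^d$-realizable ones, and your fallback of invoking ``the Kneser--Poulsen inequality for at most $d+3$ balls as a separate input'' appeals to an open problem: the conjecture is known for at most $d+1$ balls in $\bbE^d$ (Gromov), but not for $d+2$ or $d+3$ balls when $d\ge 3$. Second, the sign of $\Phi'(t)$. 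The correct Csik\'os-type formula for $\frac{d}{dt}\Vol_d\bigl[\bigcap_{i\in S}B_d(\bfp_i,r_i)\bigr]$ is $-\sum_{i<j\in S}d_{ij}'\Vol_{d-1}\bigl[\tilde W^S_{ij}\bigr]$, where $\tilde W^S_{ij}$ is the wall of the \emph{farthest-point} power diagram of the subfamily $S$, not simply $L_{ij}\cap\bigcap_{k\in S}B_d(\bfp_k,r_k)$. Because these walls carry inequality constraints depending on all of $S$, the alternating sum $\sum_{S\ni i,j}(-1)^{|S|}\Vol\bigl[\tilde W^S_{ij}\bigr]$ is not disposed of by $\sum_{T\subseteq A}(-1)^{|T|}=0$; already for three balls it differs from the Csik\'os union wall $\Vol_{d-1}[W_{ij}]$. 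The two expressions for the derivative of the union agree only after being contracted with a velocity vector $(d_{ij}')$ tangent to the set of $\bbE^d$-realizable distance matrices, and the velocities of your lifted motion are precisely not of that kind --- so ``the identity persists along $\Phi$'' is the assertion that fails.

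The paper's proof uses a different device that bypasses both problems: it never tries to push the configuration (or its subconfigurations) back down to $\bbE^d$. The motion stays in $\bbE^{d+2k}$, and the $d$-dimensional volume is recovered from the $(d+2k)$-dimensional one by differentiating $k$ times with respect to the parameter $s$ that shifts every squared radius to $r_i^2+s$, via the Archimedes-type identity $\frac{d}{ds}\Vol_{n+2}\bigl[P(s)\bigr]=\pi\Vol_n\bigl[S\cap P(s)\bigr]$ for a ball cut by halfspaces whose boundary hyperplanes are orthogonal to the subspace $S$ (Lemma~\ref{Arhimed}, i.e.\ Corollary~6 of Bezdek--Connelly). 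Interchanging $\partial^k/\partial s^k$ with Csik\'os's formula reduces everything to showing that $\partial^k/\partial s^k$ of each wall volume in the radical hyperplane $L_{ij}(t)\cong\bbE^{d+2k-1}$ is nonnegative. This is where the hypothesis enters: each wall is a ball cut by at most $d+1$ halfspaces (one per ball meeting $B_i\cap B_j$, and no new triple intersections appear under expansion), so inside $L_{ij}(t)$ there is a subspace of every dimension at least $d+1$ orthogonal to all the cutting hyperplanes, the identity can be applied $k$ times, and the $k$-th $s$-derivative becomes a perimeter, hence nonnegative. Your bound $|S|\le d+3$ encodes the same combinatorial fact, but the analytic engine --- the $s$-derivative relating volumes across dimensions differing by two --- is absent from your argument, and without it the proposal does not go through.
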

As the limiting case of this theorem, we get the following corollary:
\begin{corollary}\label{MainCol}
Consider a configuration of $N$ closed $d$-dimensional balls defined by their centers $\bfp= (\bfp_1,\ldots , \bfp_N)$ in $\bbE^d$ and corresponding radii $\bfr=(r_1,\dots,r_N)$. Assume that the intersection of every pair of these balls has common \emph{interior} points with no more than $d+1$ other balls from the considered configuration. Then for any expansion $\bfq= (\bfq_1,\ldots , \bfq_N)\subset\bbE^d$ of the centers $\bfp$, inequality~(\ref{KPineq}) holds.
\end{corollary}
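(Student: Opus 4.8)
The strategy is to run the differential-geometric argument of Csikos together with the dimension-raising trick of Bezdek--Connelly, and to use the intersection hypothesis to carry out the one step which is still open in dimension $\ge 3$.

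I would begin from Csikos's formula for the volume of a union of moving balls: for a real-analytic motion $t\mapsto(\bfp_1(t),\dots,\bfp_N(t))$ of centers in $\bbE^m$,
\[
\frac{d}{dt}\,\Vol_m\!\Big(\bigcup_{i} B_m(\bfp_i(t),r_i)\Big)=\sum_{i<j}\lambda_{ij}(t)\,\frac{d}{dt}\,|\bfp_i(t)-\bfp_j(t)|,
\]
where $\lambda_{ij}(t)\ge 0$ is the $(m-1)$-dimensional measure of the part of the radical hyperplane $\{x:|x-\bfp_i|=|x-\bfp_j|\}$ that lies in $B_m(\bfp_i,r_i)\cap B_m(\bfp_j,r_j)$ but in the interior of no other ball (the same identity with all signs reversed computes the derivative of the volume of the intersection). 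As every $\lambda_{ij}$ is nonnegative, a union of moving balls has non-decreasing volume along any analytic expansion, and their intersection non-increasing volume; the obstruction is that a prescribed expansion $\bfq$ of $\bfp$ cannot, in general, be joined to $\bfp$ by an analytic expansion that stays inside $\bbE^d$.

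To circumvent this I would use the Bezdek--Connelly interpolation of squared distances, $t_{ij}(s)=(1-s)\,|\bfp_i-\bfp_j|^2+s\,|\bfq_i-\bfq_j|^2$. The cone of Euclidean squared-distance matrices is convex, and a convex combination of two such matrices of embedding dimension $\le d$ has embedding dimension $\le 2d$, so $\big(t_{ij}(s)\big)_{s\in[0,1]}$ is realized by a monotone, piecewise-analytic expansion $s\mapsto\widetilde{\bfp}(s)$ in $\bbE^{2d}$ joining $(\bfp,\mathbf 0)$ to $(\bfq,\mathbf 0)$. Csikos's formula applied there gives $\Vol_{2d}\big(\bigcup_i B_{2d}((\bfp_i,\mathbf 0),r_i)\big)\le\Vol_{2d}\big(\bigcup_i B_{2d}((\bfq_i,\mathbf 0),r_i)\big)$, and, since no property of the radii was used, the corresponding inequality for every vector of radii at once.

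The hypothesis is used to descend from $\bbE^{2d}$ back to $\bbE^d$. It bounds the nerve of the balls: if $d+4$ of them had a common point $x$, then for any two of them $B_i\cap B_j$ would contain $x$ and hence meet each of the remaining $d+2$ balls, contradicting the assumption; and the property that no $d+4$ balls have a common point is inherited by $\bfq$, since by Kirszbraun's extension theorem an expansion cannot turn an empty intersection of balls into a nonempty one. Inclusion--exclusion therefore truncates --- for the balls in $\bbE^d$ and, equally, for those about $(\bfp_i,\mathbf 0)$ in $\bbE^{2d}$ --- to subfamilies $S$ of at most $d+3$ balls, so it suffices to control $\Vol_d\big(\bigcap_{i\in S}B_d(\bfp_i,r_i)\big)$ for such $S$; when $|S|\le d+1$ the centers span an affine subspace of dimension $\le d$, the squared-distance interpolation stays realizable inside $\bbE^d$, and Csikos's intersection formula applies there directly, leaving only the cases $|S|\in\{d+2,d+3\}$ to be settled by the lifting device applied to the subfamily. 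The crux, where I expect the real work to lie, is the reassembly: term by term the monotonicities run against the combinatorial signs --- odd-order intersections shrink under an expansion yet enter with a plus sign, a clash already visible for $d=1$ --- so one cannot conclude directly and must instead deduce the $d$-dimensional inequality from the $2d$-dimensional one valid for all radii, using the truncation above together with the fact that, under the hypothesis, each weight $\lambda_{ij}$ depends only on the at most $d+3$ balls meeting $B_i\cap B_j$, so as to invert in a positivity-preserving way the slicing transform $(r_i)_i\mapsto\int_{\bbR^{d}}(\,\cdot\,)\,dy$ by which $\Vol_d$ is recovered from $\Vol_{2d}$.
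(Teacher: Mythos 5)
There is a genuine gap, on two levels. First, the statement at hand is the Corollary, and the paper does not prove it from scratch: it is the \emph{limiting case} of Theorem~\ref{MainTh}. Replace the radii by $r_i(s)=\sqrt{r_i^2+s}$ with $s<0$ small. If a triple $B_d(\bfp_i,r_i)\cap B_d(\bfp_j,r_j)\cap B_d(\bfp_m,r_m)$ is nonempty but the three balls have no common interior point, then the minimum over $x$ of $\max(|x-\bfp_i|^2-r_i^2,\,|x-\bfp_j|^2-r_j^2,\,|x-\bfp_m|^2-r_m^2)$ equals $0$, so the shrunk triple is empty for every $s<0$; since there are finitely many triples, for all small $s<0$ the shrunk configuration satisfies the hypothesis of Theorem~\ref{MainTh} (``common points'' rather than ``common interior points''). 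Applying that theorem and letting $s\to 0^-$, continuity of the volumes gives~(\ref{KPineq}). Your proposal never makes this reduction and instead attempts to prove the stronger statement directly.

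Second, the direct attempt does not close, and by your own account: the ``reassembly'' / ``positivity-preserving inversion of the slicing transform'' that you defer to the end is precisely the content of the theorem, and the information you retain from the hypothesis is too weak to supply it. You use the hypothesis only to bound the nerve (no $d+4$ balls share a point), but that gives no bound on the number of balls meeting a fixed pairwise intersection $B_i\cap B_j$, which can be as large as $N-2$ even when no four balls have a common point. The mechanism that actually works is different: lift to $\bbE^{d+2k}$, decompose the union into truncated Voronoi cells, and recover $V_d$ from $V_{d+2k}$ by $k$-fold differentiation in the common shift $s$ of the squared radii (Corollary~\ref{ArhimedCol}), so that after Csik\'os's formula (Theorem~\ref{Csikos}) everything reduces to the nonnegativity of $\frac{\partial^k}{\partial s^k}\Vol_{d+2k-1}\left[W_{d+2k-1,ij}(\bfp(t),s)\right]$ for each wall. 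The hypothesis enters exactly there: because $B_i\cap B_j$ meets at most $d+1$ other balls, each wall is a ball in a $(d+2k-1)$-dimensional hyperplane cut by at most $d+1$ halfspaces, so for every $l\le k$ there is a $(d+2k-2l+1)$-dimensional subspace through the center containing all the normals, and the $l$-th $s$-derivative becomes a positive multiple of the surface measure of the spherical part of a lower-dimensional truncated polytope, hence nonnegative (Lemma~\ref{ContLemma}). With only a nerve bound, the number of independent normals cutting a wall is uncontrolled and the required orthogonal subspace need not exist; and the inclusion--exclusion sign clash you correctly identify is exactly why the alternative route you sketch cannot be completed as stated.
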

Corollary~\ref{MainCol} can be viewed as a generalization of the following theorem proved in~\cite{BezdekConnelly}: Conjecture~\ref{KP1} holds, if $N\le d+3$.

The proof of Theorem~\ref{MainTh} implements the following general idea which can also be found in other works on related subjects, such as~\cite{Alexander85},~\cite{BezdekConnelly} and~\cite{CapPach}.
Namely, we embed the Euclidean space $\bbE^d$ in a higher dimensional space and instead of considering $d$-dimensional ball configurations, we consider corresponding higher dimensional objects. Viewing $\bfp$ and $\bfq$ as point configurations in a higher dimensional space, allows us to consider a piecewise smooth monotone expansion from $\bfp$ to $\bfq$.
At the same time the higher dimensional ball configurations still carry some information about the $d$-dimensional ones. It appears that under the assumption of Theorem~\ref{MainTh} we can use this information to obtain inequality~(\ref{KPineq}).

Since we will work in spaces of different dimensions, it will be convenient for the rest of the paper to fix $d$ as in Theorem~\ref{MainTh}. Due to the above mentioned result~\cite{BezdekConnelly} of Bezdek and Connelly, we can assume that $d\ge 3$. We will use letter $n$ to denote the dimension of an object in case if we want to emphasize that this dimension is not necessarily equal to $d$.

\textbf{Acknowledgments}: The author would like to thank the anonymous referee for pointing out the inaccuracies in the first version of the text.

\section{Voronoi regions}
In this subsection we first recall definitions of truncated Voronoi regions and the walls between them. Then we formulate Csik\'os's formula (Theorem~\ref{Csikos}).

Let $\bfp= (\bfp_1,\ldots , \bfp_N)$ be a configuration of points in $\bbE^n$ with balls of radii $\bfr=(r_1,\dots, r_N)$ centered at corresponding points of the configuration. 
The following sets are called \emph{(extended) Voronoi regions}:
\begin{equation}\label{VorDef}
C_{n,i}(\bfp,\bfr)=\{\bfp_0\in\bbE^n\mid\text{for all \,} j, |\bfp_0-\bfp_i|^2 - r_i^2\le |\bfp_0-\bfp_j|^2 - r_j^2  \}.
\end{equation}
\begin{remark}\label{VorRem}
It is easy to check that each Voronoi region $C_{n,i}(\bfp, \bfr)$ is a convex polyhedral set, and all of them together tile the whole Euclidean space $\bbE^n$. 
\end{remark}
We consider \emph{truncated Voronoi regions} $\hat C_{n,i}(\bfp, \bfr)=B_n(\bfp_i, r_i)\cap C_{n,i}(\bfp,\bfr)$ and for each pair of distinct indexes $i\neq j$, we define the \emph{wall} between two truncated Voronoi regions as $W_{n-1,ij}(\bfp, \bfr)=\hat C_{n,i}(\bfp, \bfr)\cap \hat C_{n,j}(\bfp, \bfr)$. Figure~\ref{pic2} gives an example of the truncated Voronoi region decomposition of the union of balls. The common boundaries of the shaded regions are the walls between corresponding truncated Voronoi regions.

\begin{figure}
\caption{The truncated Voronoi region decomposition of the union of balls}\label{pic2}
\includegraphics[width=60mm]{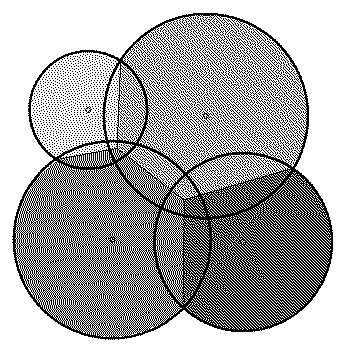}%
\end{figure}

We define the function $V_n(\bfp, \bfr)$ to be the volume of the union of balls from the ball configuration determined by $\bfp$ and $\bfr$
\begin{equation}\label{V_dDef}
V_n(\bfp, \bfr)=\Vol_n\left[\cup_{i=1}^N B_n(\bfp_i, r_i)\right].
\end{equation}
According to Remark~\ref{VorRem}, the function $V_n(\bfp, \bfr)$ can also be expressed as
$$
V_n(\bfp, \bfr)=\sum_{i=1}^N \Vol_n\left[\hat C_{n,i}(\bfp, \bfr)\right].
$$

Consider a smooth (infinitely many times differentiable) motion $\bfp(t)= (\bfp_1(t),\dots, \bfp_N(t))$ of some configuration of $N$ points in $\bbE^n$. Let $d_{ij}(t)= |\bfp_i(t)-\bfp_j(t)|$, and let $d_{ij}'$ be the $t$-derivative of $d_{ij}$. The following is Csik\'os's formula~\cite{Csikos98} for the $t$-derivative of the function $V_n(\bfp(t), \bfr)$.

\begin{theorem}\label{Csikos}
Let $n\ge 2$ and let $\bfp(t)$ be a smooth motion of a configuration of points in $\bbE^n$ such that for each $t$, all the points are pairwise distinct. Then the function $V_n(\bfp(t), \bfr)$ is differentiable with respect to $t$ and,

\begin{equation*}
\frac{d}{dt}V_n(\bfp(t), \bfr) =\sum_{1\le i<j\le N} d_{ij}'\Vol_{n-1}\left[W_{n-1,ij}(\bfp, \bfr)\right].
\end{equation*}
\end{theorem}

\section{Volume of a polyhedral set intersected with a ball}
We notice that both truncated Voronoi regions and the walls between them can be viewed as intersections of some polyhedral sets with corresponding balls. In this section we give relevant statements about the volumes of such sets. It appears that the volume of such a set can be obtained from the volume of a certain higher dimensional polyhedral set intersected with a ball. This will play an important role in our argument.

The following lemma is a reformulated Corollary~6 from~\cite{BezdekConnelly}.
\begin{lemma}\label{Arhimed}
Let $P\subset\bbE^{n+2}$ be a polyhedral set, such that all its codimension~1 facets are orthogonal to some $n$-dimensional affine subspace $X\subset\bbE^{n+2}$. Consider a point $\bfp_0\in X$. Then for every pair of real numbers $r$ and $s$, the following derivative exists and 
$$
\frac{d}{ds}\Vol_{n+2}\left[P\cap B_{n+2}(\bfp_0, \sqrt{r^2+s})\right]= \pi\Vol_n\left[ X\cap P\cap B_{n+2}(\bfp_0, \sqrt{r^2+s})\right].
$$
\end{lemma}

The following corollary can be proved by applying Lemma~\ref{Arhimed} several times.
\begin{corollary}\label{ArhimedCol}
Given a positive integer $k$, let $P\subset\bbE^{n+2k}$ be a polyhedral set, such that all its codimension~1 facets are orthogonal to some $n$-dimensional affine subspace $X\subset\bbE^{n+2k}$. Consider a point $\bfp_0\in X$. Then for every pair of real numbers $r$ and $s$, the following derivative exists and 
\begin{multline*}
\frac{d^k}{ds^k}\Vol_{n+2k}\left[P\cap B_{n+2k}(\bfp_0, \sqrt{r^2+s})\right]=\\
 \pi^k\Vol_n\left[ X\cap P\cap B_{n+2k}(\bfp_0, \sqrt{r^2+s})\right].
\end{multline*}
\end{corollary}


Given a positive integer $n$ and a non-negative integer $k$, let $\mathcal P_n^k$ be the space of all polyhedral sets in $\bbE^{n+2k}$ that are intersections of some $n$ half-spaces. We define topology on $\mathcal P_n^k$ in the following way. 
For a sufficiently small $\varepsilon>0$, an $\varepsilon$-neighborhood of a polyhedral set $P\in\mathcal P_n^k$ consists of all $P'\in\mathcal P_n^k$, such that $P$ and $P'$ can be represented as the intersection of $n$ half-spaces $H_1,\dots,H_n$ and $H_1',\dots,H_n'$ respectively, such that for each $i=1,\dots,n$, the hyperplanes $\partial H_i$ and $\partial H_i'$ are $\varepsilon$-close in some fixed metric on the affine Grassmannian $Graff(n+2k-1,n+2k)$ and the intersection $H_i\cap H_i'$ is either a half-space or a polyhedral set with an obtuse angle between its codimantion~$1$ facets.


\begin{lemma}\label{bound_l}
For $n\ge 3$, consider a polyhedral set $P\in\mathcal P_n^k$ and a point $\bfp_0\in\bbE^{n+2k}$. 
Then for every pair of real numbers $r$ and $s$, the following derivative exists and satisfies the inequality
\begin{multline}\label{bounds1}
0\le \frac{d^{k+1}}{ds^{k+1}}\Vol_{n+2k}\left[P\cap B_{n+2k}(\bfp_0, \sqrt{r^2+s})\right]\le \\
\max\left\{\frac{1}{2}\pi^k\sigma_{n-1} (r^2+s)^{\frac{n-2}{2}}, 0\right\},
\end{multline}
where $\sigma_{n-1}$ is the $(n-1)$-dimensional surface volume of the $n$-dimensional unit ball.
Moreover, the derivative $\frac{d^{k+1}}{ds^{k+1}}\Vol_{n+2k}\left[P\cap B_{n+2k}(\bfp_0, \sqrt{r^2+s})\right]$ depends continuously on $P$ and $s$ simultaneously.
\end{lemma}
\begin{proof}
Since $P\in\mathcal P_n^k$, it can be represented as $P=\cap_{i=1}^n H_i$ for some half-spaces $H_i\subset\bbE^{n+2k}$, hence there exists an $n$-dimensional affine subspace $X$ that contains the point $\bfp_0$ and is orthogonal to the boundary hyperplanes of the half-spaces $H_1,\dots, H_n$. Then Corollary~\ref{ArhimedCol} implies that 
\begin{multline*}
\frac{d^{k+1}}{ds^{k+1}}\Vol_{n+2k}\left[P\cap B_{n+2k}(\bfp_0, \sqrt{r^2+s})\right] = \\
\pi^k \frac{d}{ds}\Vol_n\left[ X\cap P\cap B_{n+2k}(\bfp_0, \sqrt{r^2+s})\right].
\end{multline*}

Now according to the chain rule, 
\begin{multline}\label{chain_r}
\frac{d^{k+1}}{ds^{k+1}}\Vol_{n+2k}\left[P\cap B_{n+2k}(\bfp_0, \sqrt{r^2+s})\right] = \\
\frac{\pi^k}{2\sqrt{r^2+s}} \left.\frac{d}{d\tilde r}\Vol_n\left[ X\cap P\cap B_{n+2k}(\bfp_0, \tilde r)\right]\right|_{\tilde r=\sqrt{r^2+s}}.
\end{multline}
Note that $X\cap P\cap B_{n+2k}(\bfp_0, \tilde r)$ is the intersection of an $n$-dimensional polyhedral set $X\cap P$ with a ball of radius $\tilde r$. The derivative of the volume of this set with respect to $\tilde r$ 
is equal to the surface volume of the spherical part of its boundary. Since this surface volume is non-negative and not greater than the surface volume of the $n$-dimensional ball of radius $\tilde r$, we obtain the required inequalities~(\ref{bounds1}).

Finally, we notice that the surface volume of the spherical part of the boundary considered in the previous paragraph, depends continuously on $P\in \mathcal P_n^k$ and $s$ simultaneously, hence according to~(\ref{chain_r}), when $r^2+s\neq 0$, the derivative $\frac{d^{k+1}}{ds^{k+1}}\Vol_{n+2k}\left[P\cap B_{n+2k}(\bfp_0, \sqrt{r^2+s})\right]$ also depends continuously on $P$ and $s$ simultaneously.

On the other hand, when $r^2+s$ approaches zero, both the lower and the upper bounds in~(\ref{bounds1}) approach zero as well, which implies that when $r^2+s=0$, the derivative $\frac{d^{k+1}}{ds^{k+1}}\Vol_{n+2k}\left[P\cap B_{n+2k}(\bfp_0, \sqrt{r^2+s})\right]$ is also continuous in $P$ and $s$ simultaneously.
\end{proof}

\section{The volumes of the walls and their derivatives}
We return to the original setting where we have a configuration of $N$ balls of corresponding radii $r_1,\dots, r_N$. 

\begin{deff}
Given a vector of radii $\bfr=(r_1,\dots,r_N)\in\bbR^N$, we will say that a configuration of $N$ points $\bfp=(\bfp_1,\dots,\bfp_N)$ in some Euclidean space $\bbE^n$ is $(d,\bfr)$-\emph{nice}, if in the configuration of $N$ balls of radii $r_1,\dots,r_N$ centered at the corresponding points $\bfp_1,\dots,\bfp_N$ the intersection of each pair of balls has common points with no more than $d+1$ other balls.
\end{deff}
\begin{deff}
We will also say that a configuration of $N$ points $\bfp=(\bfp_1,\dots,\bfp_N)$ in some Euclidean space $\bbE^n$ (where $n\ge N-1$) is \emph{in a strictly general position} if the points of $\bfp$ are vertexes of a non-degenerate $(N-1)$-simplex.
\end{deff}

From now on we fix the radii $\bfr=(r_1,\dots,r_N)$ and we consider a one parameter family
\begin{equation*}
\bfr(s)=\left(\sqrt{r_1^2+s},\dots,\sqrt{r_N^2+s}\right)
\end{equation*}
which coincides with the initial vector of radii $\bfr$, when $s=0$. 
\begin{proposition}\label{CellProp}
Let $\bfp=(\bfp_1,\dots,\bfp_N)\subset\bbE^n$ be a configuration of $N$ points. Then

(i) Each Voronoi region $C_{n,i}(\bfp,\bfr(s))$ is a convex polyhedral set completely determined by $\bfp$ and $\bfr$ and independent of $s$.

(ii) Each truncated Voronoi region $\hat C_{n,i}(\bfp, \bfr(s))$ is an intersection of a fixed convex polyhedral set from part~(i) and the ball $B_n(\bfp_i,\sqrt{r_i^2+s})$.

(iii) Each wall between truncated Voronoi regions $W_{n-1,ij}(\bfp,\bfr(s))$ is an intersection of the ball $B_n(\bfp_i,\sqrt{r_i^2+s})$ with an $(n-1)$-dimensional convex polyhedral set independent from $s$ and lying in the radical hyperplane of the balls $B_n(\bfp_i,r_i)$ and $B_n(\bfp_j,r_i)$.
\end{proposition}
\begin{proof}
(i) As it was noticed in Remark~\ref{VorRem}, the Voronoi region $C_{n,i}(\bfp,\bfr(s))$ is a convex polyhedral set. Its independence from $s$ follows from its definition~(\ref{VorDef}).

Parts (ii) and (iii) immediately follow from part~(i).
\end{proof}

Now we prove our key lemma:
\begin{lemma}\label{ContLemma}
(i) Let $d\ge 2$, and let $k$ be a nonnegative integer. 
Consider a $(d,\bfr)$-nice configuration of $N$ points $\bfp=(\bfp_1,\dots,\bfp_N)$ in $\bbE^{d+2k}$. Then for every pair of distinct indexes $i\neq j$ the $(d+2k-1)$-dimensional volume of the wall between truncated Voronoi regions $\Vol_{d+2k-1}\left[W_{d+2k-1,ij}(\bfp, \bfr(s))\right]$ is at least $k$ times differentiable as a function of $s$ in a sufficiently small neighborhood $U$ of the point $s=0$. Also for each $s\in U$, the 
partial derivatives
\begin{equation}\label{DWalls}
\frac{\partial^k}{\partial s^k}\Vol_{d+2k-1}\left[W_{d+2k-1,ij}(\bfp, \bfr(s))\right]
\end{equation}
are non-negative and locally bounded as functions of $(\bfp,s)$.

(ii) If in addition to the conditions of the first part the configuration $\bfp$ is in a strictly general position, then the partial derivatives~(\ref{DWalls}) 
are locally continuous in $\bfp$ and $s$ simultaneously.
\end{lemma}

\begin{proof}
For $k=0$ the Lemma is obvious, so further we will assume that $k>0$.

We denote by $H$ the radical hyperplane of 
the balls $B_{d+2k}(\bfp_i, r_i)$ and $B_{d+2k}(\bfp_j, r_j)$. Then according to part~(iii) of Proposition~\ref{CellProp}, hyperplane $H$ contains the wall $W_{d+2k-1,ij}(\bfp, \bfr(s))$ which corresponds to these two balls. Let the point $\bfp_0$ be the orthogonal projection of the point $\bfp_i$ onto the hyperplane $H$ and define $h=|\bfp_i-\bfp_0|$.

Since the point configuration $\bfp$ is $(d,\bfr)$-nice and the set of $(d,\bfr)$-nice point configurations is open, there exists a neighborhood of the origin $U\subset\bbR$, such that for all $s\in U$ the configuration $\bfp$ is $(d,\bfr(s))$-nice. This implies that for all $s\in U$, 
the wall $W_{d+2k-1,ij}(\bfp, \bfr(s))$ can be viewed as the intersection of the ball $B(\bfp_0,\sqrt{r_i^2-h^2+s})$  
with $d+1$ half-spaces $H_1,\dots,H_{d+1}$ in $H$. This observation together with Lemma~\ref{bound_l} proves part~(i) of Lemma~\ref{ContLemma}.

We notice that if the configuration $\bfp$ is in a strictly general position, then both the hyperplane $H$ and the half-spaces $H_1,\dots,H_{d+1}$ depend locally continuously on $\bfp$. Thus Lemma~\ref{bound_l} implies part~(ii) of Lemma~\ref{ContLemma}.
\end{proof}

\section{A path between $\bfp$ and $\bfq$}

Now the proof of Theorem~\ref{MainTh} is essentially based on choosing an appropriate piecewise smooth path in the space of sufficiently high dimension that connects the configurations $\bfp$ and $\bfq$. More detailed arguments follow.

\begin{lemma}\label{PathLemma}
If $\bfp(t)= (\bfp_1(t),\dots, \bfp_N(t))$ is a piecewise smooth motion of a configuration of centers in $\bbE^{d+2k}$ with $d\ge 2$ and $t\in[0,1]$, such that $\bfp(t)$ is $(d,\bfr)$-nice for all $t\in[0,1]$ and is in a strictly general position for all but finitely many values of $t$ in $[0,1]$, then the following identity holds: 
\begin{multline}\label{PathMult}
\left.\frac{\partial^k}{\partial s^k}\left(V_{d+2k}(\bfp(1), \bfr(s)) - V_{d+2k}(\bfp(0), \bfr(s))\right)\right|_{s=0}=\\
 \int_0^1 \sum_{1\le i<j\le N} d_{ij}'\left.\frac{\partial^k}{\partial s^k}\Vol_{d+2k-1}\left[W_{d+2k-1,ij}(\bfp(t), \bfr(s))\right]\right|_{s=0}dt,
\end{multline}
where the function $V_{d+2k}$ is defined as in~(\ref{V_dDef}) and $d_{ij}(t)=|\bfp_i(t)-\bfp_j(t)|$.
\end{lemma}
\begin{proof}
It is obvious that
\begin{multline*}
\left.\frac{\partial^k}{\partial s^k}\left(V_{d+2k}(\bfp(1), \bfr(s)) - V_{d+2k}(\bfp(0), \bfr(s))\right)\right|_{s=0}=\\ 
 \left.\frac{\partial^k}{\partial s^k}\int_0^1 \frac{\partial}{\partial t}V_{d+2k}(\bfp(t), \bfr(s)) dt\right|_{s=0}.
\end{multline*}
Now according to Csik\'os's formula (Theorem~\ref{Csikos}) we get that
\begin{multline*}
\left.\frac{\partial^k}{\partial s^k}\int_0^1 \frac{\partial}{\partial t}V_{d+2k}(\bfp(t), \bfr(s)) dt\right|_{s=0}=\\
 \left.\frac{\partial^k}{\partial s^k}\int_0^1 \sum_{1\le i<j\le N} d_{ij}'\Vol_{d+2k-1}\left[W_{d+2k-1,ij}(\bfp(t), \bfr(s))\right]dt\right|_{s=0}.
\end{multline*}
Finally, it follows from Lemma~\ref{ContLemma} that we can change the order of differentiation and integration in the last expression:
\begin{multline*}
\left.\frac{\partial^k}{\partial s^k}\int_0^1 \sum_{1\le i<j\le N} d_{ij}'\Vol_{d+2k-1}\left[W_{d+2k-1,ij}(\bfp(t), \bfr(s))\right]dt\right|_{s=0}=\\
 \int_0^1 \sum_{1\le i<j\le N} d_{ij}'\left.\frac{\partial^k}{\partial s^k}\Vol_{d+2k-1}\left[W_{d+2k-1,ij}(\bfp(t), \bfr(s))\right]\right|_{s=0}dt.
\end{multline*}
\end{proof}

\begin{corollary}\label{PerturbCol}
If $d\ge 2$ and $\bfp\subset\bbE^{d+2k}$ is a $(d,\bfr)$-nice configuration of $N$ points, where $N\le d+2k+1$, 
then the function $\left.\frac{\partial^k}{\partial s^k}V_{d+2k}(\bfp, \bfr(s))\right|_{s=0}$ is locally continuous in variable $\bfp$.
\end{corollary}
\begin{proof}
Since $\bfp$ is $(d, \bfr)$-nice, all point configurations that are sufficiently close to $\bfp$, are also $(d,\bfr)$-nice. If $\bfp'\subset\bbE^{d+2k}$ is a configuration of centers that is sufficiently close to $\bfp$, then we can connect the configurations $\bfp$ and $\bfp'$ with a piecewise smooth path $\bfp(t)$ that satisfies Lemma~\ref{PathLemma}. Then Corollary~\ref{PerturbCol} follows from the fact that the functions 
$$
\left.\frac{\partial^k}{\partial s^k}\Vol_{d+2k-1}\left[W_{d+2k-1,ij}(\bfp(t), \bfr(s))\right]\right|_{s=0}
$$
in the right hand side of~(\ref{PathMult}) are bounded, as was shown in Lemma~\ref{ContLemma}.
\end{proof}

\begin{corollary}\label{ExpCor}
If $d\ge 2$ and $\bfp,\bfq\subset\bbE^{d+2k}$ are two configurations of $N$ points that are in a strictly general position, $\bfq$ is an expansion of $\bfp$, and configuration $\bfp$ is $(d,\bfr)$-nice, then
$$
\left.\frac{\partial^k}{\partial s^k}V_{d+2k}(\bfq, \bfr(s))\right|_{s=0} \ge \left.\frac{\partial^k}{\partial s^k}V_{d+2k}(\bfp, \bfr(s))\right|_{s=0}.
$$
\end{corollary}
\begin{proof}
According to~\cite{Alexander85}, configurations $\bfp$ and $\bfq$ can be connected by a piecewise smooth motion $\bfp(t)$ so that $\bfp(0)=\bfp$, $\bfp(1)=\bfq$, the distances $d_{ij}(t)=|\bfp_i(t)-\bfp_j(t)|$ are weakly increasing in $t$ and for each $t$ the point configuration $\bfp(t)$ is in a strictly general position. It follows from Kirszbraun's theorem~\cite{Kirszbraun} that since the distances $d_{ij}(t)$ are weakly increasing and $\bfp(0)$ is $(d,\bfr)$-nice, then $\bfp(t)$ is $(d,\bfr)$-nice for all $t\in[0,1]$, and we can apply Lemma~\ref{PathLemma}:
\begin{multline*}
\left.\frac{\partial^k}{\partial s^k}(V_{d+2k}(\bfq, \bfr(s)) - V_{d+2k}(\bfp, \bfr(s)))\right|_{s=0}=\\
 \int_0^1 \sum_{1\le i<j\le N} d_{ij}'\left.\frac{\partial^k}{\partial s^k}\Vol_{d+2k-1}\left[W_{d+2k-1,ij}(\bfp(t), \bfr(s))\right]\right|_{s=0}dt.
\end{multline*}
Because of Lemma~\ref{ContLemma}, the derivatives 
$$
\left.\frac{\partial^k}{\partial s^k}\Vol_{d+2k-1}\left[W_{d+2k-1,ij}(\bfp(t), \bfr(s))\right]\right|_{s=0}
$$
are always non-negative, and since $d_{ij}'\ge 0$, the expression under the integral is also non-negative.
\end{proof}

\begin{proof}[Proof of Theorem~\ref{MainTh}]
Let $k$ be a positive integer, such that $d+2k\ge N-1$, and we regard $\bbE^d$ as the subset $\bbE^d=\bbE^d\times\{0\}\subset\bbE^d\times\bbE^{2k}=\bbE^{d+2k}$. We can view $\bfp$ and $\bfq$ as point configurations lying either in $\bbE^d$ or in $\bbE^{d+2k}$ and consider corresponding $d$-dimensional and $(d+2k)$-dimensional volumes $V_d(\bfp,\bfr)$, $V_d(\bfq,\bfr)$, $V_{d+2k}(\bfp,\bfr)$ and $V_{d+2k}(\bfq,\bfr)$.

Note that the sets $\bigcup_{i=1}^N B_{d+2k}(\bfp_i, r_i)$ and $\bigcup_{i=1}^N B_{d+2k}(\bfq_i, r_i)$ are disjoint unions of truncated Voronoi regions and according to part~(ii) of Proposition~\ref{CellProp}, we can apply Corollary~\ref{ArhimedCol} to them. As a result, we obtain the following identity:
\begin{equation}\label{DimEq}
\pi^k(V_d(\bfq, \bfr) - V_d(\bfp, \bfr))= \left.\frac{\partial^k}{\partial s^k}(V_{d+2k}(\bfq, \bfr(s)) - V_{d+2k}(\bfp, \bfr(s)))\right|_{s=0}.
\end{equation}

Because of Kirszbraun's theorem, since $\bfp$ is $(d,\bfr)$-nice and $\bfq$ is its expansion, $\bfq$ is also $(d,\bfr)$-nice. Hence according to Corollary~\ref{PerturbCol}, the right hand side of~(\ref{DimEq}) depends locally continuously on $\bfp$ and $\bfq$. Let $\bfp', \bfq'\subset\bbE^{d+2k}$ be small perturbations of $\bfp$ and $\bfq$ respectively, such that configurations $\bfp'$ and $\bfq'$ are in a strictly general position, $\bfp'$ is $(d,\bfr)$-nice and $\bfq'$ is an expansion of $\bfp'$. Then it follows from Corollary~\ref{ExpCor} that 
\begin{equation}\label{PrimeEq}
\left.\frac{\partial^k}{\partial s^k}(V_{d+2k}(\bfq', \bfr(s)) - V_{d+2k}(\bfp', \bfr(s)))\right|_{s=0}\ge 0.
\end{equation}
By choosing $\bfp'$ and $\bfq'$ arbitrarily close to $\bfp$ and $\bfq$ respectively, we get the following inequality as a limiting case of~(\ref{PrimeEq}):
$$
\left.\frac{\partial^k}{\partial s^k}(V_{d+2k}(\bfq, \bfr(s)) - V_{d+2k}(\bfp, \bfr(s)))\right|_{s=0}\ge 0.
$$
Together with~(\ref{DimEq}) this proves that
$$
V_d(\bfq, \bfr) \ge V_d(\bfp, \bfr).
$$
\end{proof}


\nocite{*}

\bibliographystyle{abbrv}
\bibliography{small_intersect}

\begin{thebibliography}{1}

\bibitem{Alexander85}
R.~Alexander.
\newblock Lipschitzian mappings and total mean curvature of polyhedral
  surfaces. {I}.
\newblock {\em Trans. Amer. Math. Soc.}, 288(2):661--678, 1985.

\bibitem{BezdekConnelly}
K.~Bezdek and R.~Connelly.
\newblock Pushing disks apart---the {K}neser-{P}oulsen conjecture in the plane.
\newblock {\em J. Reine Angew. Math.}, 553:221--236, 2002.

\bibitem{CapPach}
V.~Capoyleas and J.~Pach.
\newblock On the perimeter of a point set in the plane.
\newblock In {\em Discrete and computational geometry ({N}ew {B}runswick, {NJ},
  1989/1990)}, volume~6 of {\em DIMACS Ser. Discrete Math. Theoret. Comput.
  Sci.}, pages 67--76. Amer. Math. Soc., Providence, RI, 1991.

\bibitem{Csikos98}
B.~Csik{\'o}s.
\newblock On the volume of the union of balls.
\newblock {\em Discrete Comput. Geom.}, 20(4):449--461, 1998.

\bibitem{Kirszbraun}
M.~D. Kirszbraun.
\newblock \"{U}ber die zusammenziehende und {L}ipschitzsche {T}ransformationen.
\newblock {\em Fundamenta Mathematicae}, 22:77--108, 1934.

\bibitem{Kneser}
M.~Kneser.
\newblock Einige {B}emerkungen \"uber das {M}inkowskische {F}l\"achenmass.
\newblock {\em Arch. Math. (Basel)}, 6:382--390, 1955.

\bibitem{Poulsen}
E.~T. Poulsen.
\newblock Problem 10.
\newblock {\em Math. Scand.}, 2:346, 1954.

\end{thebibliography}

\end{document}